\makeatletter \@addtoreset{equation}{section}
\newcommand{\beq}[1]{\begin{equation} \label{#1}}
\newcommand{\eeq}{\end{equation}}
\newcommand{\bed}{\begin{displaymath}}
\newcommand{\eed}{\end{displaymath}}
\newcommand{\bea}{\bed\begin{array}{rl}}
\newcommand{\eea}{\end{array}\eed}
\newcommand{\barray}{\begin{array}{ll}}
\newcommand{\earray}{\end{array}}
\def\one{{\hbox{1{\kern -0.35em}1}}}
\def\qedd
\baselineskip\vbox{\hrule width
1em\nointerlineskip
                                        \hbox to 1em{\vrule height 1em
                                                     \hfill
                                                     \vrule height 1em}
                                        \nointerlineskip
                                        \hrule width 1em}\bigbreak}
\def\qedsym
\def\({\left(}
\def\){\right)}
\newtheorem{thm}{Theorem}[section]
\newtheorem{prop}[thm]{Proposition}
\newtheorem{lem}[thm]{Lemma}
\newtheorem{rem}[thm]{Remark}
\newtheorem{exm}[thm]{Example}
\begin{document}

\title {On Complete Convergence in Mean for Double Sums of Independent Random Elements in Banach Spaces}

\author{L. V. THANH and N. T. THUY\\{\small Department
of Mathematics, Vinh University, Nghe An 42118 Vietnam}\\{\small
Email: levt@vinhuni.edu.vn, thuynt.tc3@nghean.edu.vn}}
\date{}

 \maketitle

\begin{abstract}
In this work, conditions
are provided under which a  normed double sum of independent random
elements in a real separable Rademacher type $p$
  Banach space converges completely to $0$ in mean of order $p$.
  These conditions for
the complete convergence in mean of order $p$ are shown to provide
an exact characterization of Rademacher type $p$ Banach spaces. In
case the Banach space is not of Rademacher type $p$, it is proved
that the complete convergence in mean of order $p$ of a normed
double sum implies a strong law of large numbers.

\vskip 0.2 in \noindent{\bf Key Words and Phrases:} Double array;
Complete convergence in mean; Strong law of large numbers; Real
separable Banach space; Rademacher type $p$ Banach space.

\vskip 0.2 in \noindent{\bf 2010 Mathematics Subject
Classifications:} 60B11; 60B12; 60F15; 60F25.

\end{abstract}

\setlength{\baselineskip}{0.25in}
\section{Introduction}\label{sec:int}

Let $\{V_{mn}, m\geq 1, n\geq 1\}$ be a double array of random
elements in a real separable Banach space $\mathcal{X}$ with norm
$\|.\|$. Throughout this paper, we write
$$S(m,n)=S_{mn}=\sum^m_{i=1}\sum^n_{j=1}V_{ij},m\ge 1,n\ge 1.$$ For
$a,b \in \Bbb{R}$, $\max\{a,b\}$ will be denoted by ${a\vee b}$. The
symbol $C$ will denote a generic constant $(0<C<\infty)$ which
 is not necessarily the same one in each appearance.

We recall that $V_{mn}$ is said to {\it converge completely to $0$}
(denoted $V_{mn}\overset{c}{\to} 0$) if
$$\sum_{m=1}^\infty\sum_{n=1}^\infty P(\|V_{mn}\|>\varepsilon)<\infty \mbox{ for all } \varepsilon>0$$
and that for $p>0$, $V_{mn}$ is said to  {\it converge to $0$ in
mean of order $p$}  as $m\vee n \to \infty$
 (denoted $V_{mn}\overset{L_p}{\to}0$ as $m\vee n \to \infty$) if
 $$E\|V_{mn}\|^p\to 0\mbox{ as }m\vee n\to\infty.$$
 By the Borel-Cantelli lemma, $V_{mn}\overset{c}{\to} 0$
 ensure that $V_{mn} \to 0$ almost surely (a.s.) as $m\vee n \to \infty$ (see, e.g., \cite{RosalskyThanh06}).
But the modes of convergence $V_{mn}\overset{c}{\to}0$ and
$V_{mn}\overset{L_p}{\to} 0$
 are not comparable in general. The double array $V_{mn}$ is said to
 {\it converges completely to $0$ in mean of order $p$} (denoted $V_{mn}\overset{c,L_p}{\to} 0$) if
  $$\sum_{m=1}^\infty\sum_{n=1}^\infty E\|V_{mn}\|^p <\infty.$$
  It is easy to see that $V_{mn}\overset{c,L_p}{\to} 0$ ensure both $V_{mn}\overset{c}{\to}
  0$ and $V_{mn}\overset{L_p}{\to}0$ as $m\vee n \to \infty$.
  However, as we
  will see later in Example \ref{exam41} that the converse is not true.

 The notion of complete convergence in mean of order $p$ ($p>0$) was
apparently first investigated by Chow \cite{Chow} in the
(real-valued) random variables case. Rosalsky, Thanh and Volodin
\cite{RTV}  studied the complete convergence in mean of order $p$
for sequences of independent random elements in Banach spaces and
provided through this mode of convergence a new characterization of
Rademacher type $p$ Banach spaces. In this paper, we establish the
double sum versions for the main results in \cite{RTV}. This is done
by using recent results by Rosalsky, Thanh and Thuy in \cite{RTT}.
 The main results are Theorems \ref{thm31} and \ref{thm32}.
 Theorem \ref{thm31} provides conditions
under which the normed double sum $S_{mn}/(mn)^{(p+1)/p}$ converges completely to $0$ in mean of order
$p$, $1 \le  p \le 2$. Moreover, these conditions for
$S_{mn}/(mn)^{(p+1)/p}$ converging
completely to $0$ in mean of order $p$ are shown to provide an exact
characterization of Rademacher type $p$ Banach spaces. Theorem
\ref{thm32}  shows that in general Banach spaces, the condition
$S_{mn}/(mn)^{(p+1)/p}\overset{c, L_p}{\to} 0 \mbox{ for some } p\ge
1$ implies the strong law of large numbers (SLLN) $S_{mn}/(mn)\to 0
\mbox{ a.s. as } m\vee n\to \infty.$

The reader may refer to Gut \cite{Gut78}, Gut and Stadtm\"{u}ller
\cite{GS10, GS11}, M\'{o}ricz \cite{Moricz80}, M\'{o}ricz, Su and
Taylor \cite{MSTa}, M\'{o}ricz, Stadtm\"{u}ller and Thalmaier
\cite{MSTh}, Smythe \cite{Smythe73} and references therein for SLLN
and other limit theorems for double arrays of random variables.
 Rosalsky and Thanh \cite{RosalskyThanh06}
gave a brief discussion of a historical nature concerning double
sums and on their importance in the field of statistical physics. In
a major surrey article \cite{Pyke}, Pyke discussed fluctuation
theory, the limiting Brownian sheet, the SLLN, and the law of the
iterated logarithm for double arrays of independent identically
distributed real-valued random variables. Recently, Klesov
\cite{Klesov14} published a comprehensive book on multiple sums.

The plan of the paper is as follows. Notation, technical
definitions, and six known propositions and lemmas which are used in
proving the main results are consolidated into Section 2. The
 main results are established in Section 3.
 In Section 4, two illustrating examples concerning the sharpness of
Theorems \ref{thm31} and \ref{thm32} are presented.

\section{Preliminaries}\label{sec:Kol}
In this section, notation, lemmas and propositions which are needed
in connection with the main results will be presented.

 The {\it expected value} or {\it mean} of a Banach space $\mathcal{X}$-valued
random element $V$, denoted $EV$, is defined to be the {\it Pettis
integral} provided it exists.  If $E\|V\|<\infty$, then (see, e.g.,
Taylor \cite[p. 40]{Taylor}) $V$ has an expected value.  But the
expected value can exist when $E\|V\|=\infty$.  For an example, see
Taylor \cite[p. 41]{Taylor}.

The reader may refer to Hoffmann-J{\o}rgensen and Pisier
\cite{Hoffmann} for definition, properties and examples of
Rademacher type $p$ Banach spaces. Hoffmann-J{\o}rgensen and Pisier
\cite{Hoffmann} proved for $1\leq p\leq 2$ that a real separable
Banach space is of Rademacher type $p$ if and only if there exists a
constant $C$ depending only on $p$ such that

\begin{equation}
  \label{HJP}
  E\Bigg|\Bigg|\sum^n_{j=1}V_j\Bigg|\Bigg|^p \leq C\sum^n_{j=1} E||V_j||^p
\end{equation}
for every finite collection $\{V_1,\dots,V_n\}$ of independent mean
0 random elements.

The proof of the following simple lemma can be found in \cite{RTV}.

\begin{lem}\label{RTV}
Let $\{V_n, n\ge 1\}$ be a sequence of independent mean $0$ random
elements in a real separable Banach space. Then for all $ p\ge 1$,
the sequence $ \{E\|\sum_{j=1}^n V_{j}\|^p,n\ge 1\}$ is
nondecreasing.
\end{lem}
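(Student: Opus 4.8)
The plan is to reduce everything to the one-step estimate $E\|S_n\|^p \le E\|S_{n+1}\|^p$, where $S_n=\sum_{j=1}^n V_j$; the asserted monotonicity of $\{E\|S_n\|^p, n\ge 1\}$ then follows by induction on $n$. Since $S_{n+1}=S_n+V_{n+1}$ with $V_{n+1}$ independent of $S_n$ and $EV_{n+1}=0$, the content of the lemma is precisely that appending an independent mean-zero increment never decreases the $p$-th mean of the norm.

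For the key step I would fix $x\in\mathcal{X}$ and consider $\phi_x(y)=\|x+y\|^p$. For $p\ge 1$ this is a continuous convex functional on $\mathcal{X}$, being the composition of the convex map $y\mapsto\|x+y\|$ with the nondecreasing convex map $t\mapsto t^p$ on $[0,\infty)$; hence it is subdifferentiable everywhere, and in particular there is a continuous linear functional $x^*\in\mathcal{X}^*$ supporting $\phi_x$ at the point $0$, so that $\phi_x(y)\ge \phi_x(0)+x^*(y)=\|x\|^p+x^*(y)$ for all $y\in\mathcal{X}$. Replacing $y$ by $V_{n+1}$ and taking expectations, the defining property of the Pettis integral gives $E x^*(V_{n+1})=x^*(EV_{n+1})=0$, whence the function $g(x):=E\|x+V_{n+1}\|^p$ satisfies the pointwise bound $g(x)\ge \|x\|^p$ for every $x\in\mathcal{X}$.

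Finally I would remove the conditioning. Because $S_n$ and $V_{n+1}$ are independent and the integrand is nonnegative, Tonelli's theorem yields $E\|S_{n+1}\|^p=E\|S_n+V_{n+1}\|^p=E\big[g(S_n)\big]$, and combining this with $g(S_n)\ge\|S_n\|^p$ gives $E\|S_{n+1}\|^p\ge E\|S_n\|^p$, as required. Working with nonnegative integrands throughout means the argument is valid in $[0,\infty]$ and needs no moment hypothesis beyond the existence of $EV_{n+1}$ that is already built into the mean-zero assumption. The only genuinely infinite-dimensional point, and hence the main obstacle, is this Jensen/supporting-functional step: one must invoke the Hahn--Banach theorem to produce the supporting functional $x^*$ for the continuous convex $\phi_x$, and then use the Pettis-integral identity $Ex^*(V_{n+1})=x^*(EV_{n+1})$ to pass the expectation inside. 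The hypothesis $p\ge 1$ is used exactly once, namely to guarantee the convexity of $\phi_x$.
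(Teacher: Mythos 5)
Your proof is correct. Note that the paper itself does not reprove this lemma; it only cites \cite{RTV}, where the argument is the standard one: since $S_n=E(S_{n+1}\mid\sigma(V_1,\dots,V_n))$, the conditional Jensen inequality for the convex function $\|\cdot\|^p$ gives $\|S_n\|^p\le E(\|S_{n+1}\|^p\mid\sigma(V_1,\dots,V_n))$ a.s., and taking expectations yields monotonicity. Your argument is essentially a hands-on version of the same idea: the supporting-functional step (Hahn--Banach applied to the continuous convex $\phi_x$, plus the Pettis identity $Ex^*(V_{n+1})=x^*(EV_{n+1})=0$) is exactly the mechanism behind the vector-valued conditional Jensen inequality, and you replace the conditioning by Tonelli's theorem over the product of the laws of $S_n$ and $V_{n+1}$ (joint measurability of $(x,\omega)\mapsto\|x+V_{n+1}(\omega)\|^p$ holds since this is a Carath\'eodory function on a separable space). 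If anything, your route is slightly more robust: it stays in $[0,\infty]$ throughout and never needs $E\|V_{n+1}\|<\infty$, only Pettis integrability with mean $0$, whereas a naive appeal to Bochner conditional expectation would implicitly require first moments of the norm. One could also make the subgradient explicit (take $x^*=p\|x\|^{p-1}f$ with $f$ a norming functional for $x$), but your general appeal to subdifferentiability of continuous convex functions is perfectly adequate.
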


Proposition \ref{RT07} is a double sum analogue of the classical Kolmogorov
SLLN in Banach spaces.

\begin{prop}[Rosalsky and Thanh \cite{RosalskyThanh07}]\label{RT07}
Let $1\le p \le 2$ and let $\mathcal{X}$ be real separable Banach
space. Then the following two statements are equivalent:

 (i) The Banach space $\mathcal{X}$ is of Rademacher type $p$.

 (ii) For every double array $\{V_{mn},m\geq 1,n\geq 1\}$ of independent
mean 0 random elements in $\mathcal{X}$ and every choice of
constants $\alpha > 0$ and $\beta > 0$, the condition
\begin{equation}
\label{Kol1}\sum_{m=1}^\infty\sum_{n=1}^\infty
\dfrac{E\|V_{mn}\|^p}{m^{\alpha p}n^{\beta p}}<\infty
\end{equation}
implies that the SLLN
\begin{equation}\label{Kol2}
\dfrac{S_{mn}}{m^{\alpha}n^{\beta}} \to 0 \mbox{ a.s. as } m\vee
n\to \infty
\end{equation}
obtains.
\end{prop}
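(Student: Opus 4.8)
The statement is an equivalence, so I would establish the two implications by separate means: (i) $\Rightarrow$ (ii) is an analytic argument built on the type $p$ inequality, while (ii) $\Rightarrow$ (i) is a reduction to the classical one-dimensional characterisation.

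For (i) $\Rightarrow$ (ii), assume $\mathcal{X}$ is of Rademacher type $p$ and set $W_{ij}=V_{ij}/(i^{\alpha}j^{\beta})$; these are independent, mean $0$, and hypothesis \eqref{Kol1} reads $\sum_{i,j}E\|W_{ij}\|^p<\infty$. Applying \eqref{HJP} over any finite rectangle of indices gives $E\|\sum_{i\le M,\,j\le N}W_{ij}\|^p\le C\sum_{i\le M,\,j\le N}E\|W_{ij}\|^p$, so the rectangular partial sums are Cauchy, hence convergent, in $L^p$. Using \lemref{RTV} coordinatewise, the $p$-th moments of these partial sums are monotone in each index, which lets me pass to the dyadic skeleton $(2^k,2^\ell)$ and, via a two-parameter maximal inequality together with the Borel--Cantelli lemma, upgrade the $L^p$ control to almost-sure convergence of the weighted sums. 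A two-dimensional Kronecker lemma with the weights $b_{ij}=i^{\alpha}j^{\beta}$ (increasing to $\infty$ in each coordinate) then converts this into the SLLN \eqref{Kol2}; here one must be careful that the limit in \eqref{Kol2} is taken as $m\vee n\to\infty$, so the row sub-series $\sum_i W_{in}$ and column sub-series $\sum_j W_{mj}$ have to be controlled as well, not only the genuinely two-dimensional partial sums.

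For (ii) $\Rightarrow$ (i), I would argue by specialisation rather than by a direct construction. Given a sequence $\{X_n\}$ of independent mean $0$ random elements in $\mathcal{X}$, define a double array by $V_{1n}=X_n$ and $V_{mn}=0$ for $m\ge 2$. Then \eqref{Kol1} becomes $\sum_n E\|X_n\|^p/n^{\beta p}<\infty$, and restricting \eqref{Kol2} to the first row (where $m=1$, so $m\vee n=n\to\infty$) yields $(\sum_{k=1}^n X_k)/n^{\beta}\to 0$ a.s. Taking $\beta=1$, (ii) forces the Banach-space Kolmogorov SLLN, namely $\sum_n E\|X_n\|^p/n^p<\infty \Rightarrow (\sum_{k=1}^n X_k)/n\to 0$ a.s., to hold for every such sequence; it is classical (see, e.g., Hoffmann-J{\o}rgensen and Pisier \cite{Hoffmann}) that this property characterises Rademacher type $p$ spaces, so $\mathcal{X}$ is of type $p$ and the equivalence is complete.

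The main obstacle is the almost-sure step in the first implication: passing from $L^p$ (or in-probability) convergence of the rectangular sums to almost-sure behaviour. In one parameter this is routine through Doob's or Kolmogorov's maximal inequality and Borel--Cantelli, but the relevant mode here is convergence as $m\vee n\to\infty$, which is genuinely two-dimensional; the filtration generated by the rectangles is only partially ordered, so the one-parameter martingale maximal inequalities do not apply directly, and one needs a two-parameter maximal inequality (for instance of Cairoli type) or a careful dyadic-block argument to control $\sup_{M,N}\|\sum_{i\le M,\,j\le N}W_{ij}\|$ simultaneously in both indices.
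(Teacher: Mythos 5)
The paper does not prove this proposition at all: it is quoted as a known result of Rosalsky and Thanh \cite{RosalskyThanh07} and used as a black box, so there is no internal proof to compare your attempt against. Judged on its own terms, your (ii) $\Rightarrow$ (i) direction is essentially right and is the standard reduction: embed a sequence $\{X_n\}$ as the first row of the array, take $\beta=1$, and invoke the Hoffmann-J{\o}rgensen--Pisier characterisation of Rademacher type $p$ via the Kolmogorov-type SLLN \cite{Hoffmann}. That part would survive refereeing with only routine checks.

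The (i) $\Rightarrow$ (ii) direction, however, contains a genuine gap at exactly the point you yourself flag, and the gap is not closed. Two concrete problems. First, the tool you name for the almost-sure upgrade, a Cairoli-type two-parameter maximal inequality, is unavailable for $p=1$: Cairoli's inequality requires $p>1$ (or $L\log L$ integrability when $p=1$), while the proposition is asserted for all $1\le p\le 2$; so your argument cannot cover the endpoint case. Second, the two-dimensional Kronecker lemma is false in the naive form you invoke. Convergence of the rectangular partial sums of $\sum_{i,j}W_{ij}$ does not by itself give $\frac{1}{m^{\alpha}n^{\beta}}\sum_{i\le m,\,j\le n}i^{\alpha}j^{\beta}W_{ij}\to 0$ as $m\vee n\to\infty$; one needs \emph{regular} convergence of the double series, i.e.\ control of the sums over arbitrary rectangles $[m_1,m_2]\times[n_1,n_2]$ including all row and column tails --- precisely the issue you acknowledge in your last paragraph but then defer to ``a careful dyadic-block argument'' without supplying it. The actual proof in \cite{RosalskyThanh07} resolves both points via symmetrization, L\'evy-type inequalities applied coordinate by coordinate, and dyadic blocking in both indices. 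As written, the almost-sure step --- which you correctly identify as the heart of the matter --- is asserted rather than proved.
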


If the Banach space is not of Rademacher type $p$, condition
\eqref{Kol1} alone does not ensure the SLLN \eqref{Kol2} (see \cite[Example 5.1]{RTT}). The next
proposition is a recent result of Rosalsky, Thanh and Thuy
\cite{RTT} which considers the law of large numbers for double sums
in a general real separable Banach space. It shows that if
\eqref{Kol1} holds, the the SLLN \eqref{Kol2} and the weak law of large numbers (WLLN)
\eqref{RTT01} are equivalent.
\begin{prop}[Rosalsky, Thanh and Thuy \cite{RTT}]\label{RTT}
Let $\alpha>0,\beta>0$ and let $\{V_{mn},m\geq 1,n\geq 1\}$ be a
double array of independent random elements in a real separable
Banach space. Assume that \eqref{Kol1} holds for some $1\le p\le 2$,
then the SLLN \eqref{Kol2} holds if and only if
 \begin{equation}\label{RTT01}\dfrac{S_{mn}}{m^\alpha n^\beta} \overset{P}{\to} 0\
\mbox{as}\ m\vee n\to \infty.
\end{equation}
\end{prop}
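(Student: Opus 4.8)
The forward implication requires no work: almost sure convergence entails convergence in probability, so \eqref{Kol2} yields \eqref{RTT01} without ever invoking \eqref{Kol1}. The whole content lies in the converse, and the plan is to deduce the SLLN \eqref{Kol2} from \eqref{Kol1} together with the WLLN \eqref{RTT01}. I would first reduce to symmetric summands: letting $\{V'_{mn}\}$ be an independent copy and $\tilde V_{mn}=V_{mn}-V'_{mn}$, the symmetrized array is independent and symmetric, satisfies $E\|\tilde V_{mn}\|^p\le 2^pE\|V_{mn}\|^p$ (so \eqref{Kol1} persists), and its normalized double sum still tends to $0$ in probability. A standard desymmetrization step — using the weak symmetrization inequalities together with \eqref{RTT01} to control medians — recovers the original conclusion once the symmetric case is settled, so I assume henceforth that each $V_{mn}$ is symmetric.

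Next I would truncate at the normalizing level, setting $\hat V_{mn}=V_{mn}\,\one_{\{\|V_{mn}\|\le m^\alpha n^\beta\}}$. By Markov's inequality and \eqref{Kol1}, $\sum_{m,n}P(\|V_{mn}\|>m^\alpha n^\beta)\le\sum_{m,n}E\|V_{mn}\|^p/(m^\alpha n^\beta)^p<\infty$, so by Borel--Cantelli only finitely many of the events $\{V_{mn}\ne\hat V_{mn}\}$ occur; the resulting finitely many exceptional summands contribute a fixed random element to every $S_{mn}$, which is annihilated upon dividing by $m^\alpha n^\beta\to\infty$. Hence it suffices to prove \eqref{Kol2} for the truncated array, which is again symmetric and satisfies $\|\hat V_{mn}\|\le m^\alpha n^\beta$. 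I note that for $p=1$ the argument already terminates, since then \eqref{Kol1} forces $\sum_{m,n}\|V_{mn}\|/(m^\alpha n^\beta)<\infty$ a.s., so $\sum_{m,n}V_{mn}/(m^\alpha n^\beta)$ converges absolutely and a two-parameter Kronecker lemma (exploiting the product structure $m^\alpha n^\beta=m^\alpha\cdot n^\beta$) delivers \eqref{Kol2} with no appeal to \eqref{RTT01}. The force of the hypothesis \eqref{RTT01} is felt only for $1<p\le 2$.

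For $1<p\le 2$ I would pass to dyadic blocks $I_0=\{1\}$, $I_k=\{2^{k-1}+1,\dots,2^k\}$. The block sums $\hat U_{kl}=\sum_{i\in I_k}\sum_{j\in I_l}\hat V_{ij}$ are independent and symmetric, and the corner sums $\hat T_{ij}=\sum_{k\le i,\,l\le j}\hat U_{kl}$ inherit $\hat T_{ij}/(2^{i\alpha}2^{j\beta})\to 0$ in probability from \eqref{RTT01} as $i\vee j\to\infty$. Since $2^{(i-1)\alpha}2^{(j-1)\beta}\le m^\alpha n^\beta$ throughout the $(i,j)$ block, \eqref{Kol2} reduces to two assertions: (a) the corner subsequence $\hat T_{ij}/(2^{i\alpha}2^{j\beta})\to 0$ a.s.; and (b) the within-block oscillation $2^{-i\alpha}2^{-j\beta}\max\{\|\hat S_{mn}\|:2^{i-1}<m\le 2^i,\ 2^{j-1}<n\le 2^j\}\to 0$ a.s. For (b) I would invoke a two-parameter L\'evy-type maximal inequality — valid in every separable Banach space for symmetric summands — which bounds the block maximum by a constant multiple of the corner value, thereby reducing (b) to (a).

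Everything thus rests on upgrading the in-probability convergence of $\hat T_{ij}/(2^{i\alpha}2^{j\beta})$ to almost sure convergence, and this is the crux and the main obstacle. Because the space is not assumed to be of Rademacher type $p$, no $L^p$ inequality bounding $E\|\hat T_{ij}\|^p$ by $\sum E\|\hat V_{ij}\|^p$ is available — precisely why \eqref{Kol1} alone cannot force \eqref{Kol2} (cf.\ the cited Example~5.1 of \cite{RTT}), in contrast to the type-$p$ setting of \propref{RT07}. The distribution-free substitute is to combine symmetrization with the Hoffmann--J{\o}rgensen maximal inequality and the It\^o--Nisio theorem: for independent symmetric summands these convert control of $\max\|\hat V\|$ (furnished by the truncation and \eqref{Kol1}) and smallness of a median/quantile (furnished by \eqref{RTT01}) into almost sure convergence, with no moment-type hypothesis on the space. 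Carrying this out along the geometrically spaced corner array $\{(2^i,2^j)\}$ — where a two-parameter Abel summation turns the product weights $2^{-k\alpha}2^{-l\beta}$ into an absolutely summable system — yields (a), and hence \eqref{Kol2}. In short, the role played by Rademacher type $p$ in \propref{RT07} is here taken over entirely by the assumed WLLN \eqref{RTT01}.
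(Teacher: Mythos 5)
This proposition is imported verbatim from \cite{RTT}; the paper under review states it without proof, so there is no in-paper argument to compare your outline against, and I am judging it on its own terms. The easy half (a.s.\ convergence for $m\vee n\to\infty$ implies convergence in probability), the symmetrization/desymmetrization reduction (which is precisely the content of Lemma \ref{RTT32}), the truncation at level $m^\alpha n^\beta$ via Borel--Cantelli, and the remark that for $p=1$ a two-parameter Kronecker argument finishes without \eqref{RTT01} are all sound.

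The gap is at what you yourself identify as the crux: upgrading $\hat T_{ij}/(2^{i\alpha}2^{j\beta})\overset{P}{\to}0$ to almost sure convergence by ``combining the Hoffmann--J{\o}rgensen maximal inequality and the It\^o--Nisio theorem.'' Neither tool applies in the form you invoke it. The It\^o--Nisio theorem equates modes of convergence for the partial sums of a \emph{fixed series} $\sum_k X_k$ of independent summands; the quantities $\hat T_{ij}/(2^{i\alpha}2^{j\beta})$ are normalized partial sums with a varying normalizer, not partial sums of a convergent series, and to manufacture such a series (say $\sum_{k,l}\hat U_{kl}/(2^{k\alpha}2^{l\beta})$, to be followed by Kronecker) you would first need its convergence in probability --- which is not implied by \eqref{RTT01} and is essentially the statement to be proved. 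As for Hoffmann--J{\o}rgensen: applied blockwise it yields $E\|\hat U_{kl}\|^p\le C\big(\sum_{\mathrm{block}}E\|\hat V_{ij}\|^p + t_{kl}^p\big)$, where $t_{kl}$ is a quantile of $\|\hat U_{kl}\|$; the WLLN gives $t_{kl}/(2^{k\alpha}2^{l\beta})\to 0$ but provides no summability of $\big(t_{kl}/(2^{k\alpha}2^{l\beta})\big)^p$ over $(k,l)$, so Markov plus Borel--Cantelli does not close. What a proof along these lines actually requires is a quantitative inequality valid in \emph{every} separable Banach space under \eqref{Kol1} alone --- in the spirit of de Acosta's inequality, showing that $\|S_{mn}\|-E\|S_{mn}\|$ (or its block analogue) already obeys the SLLN, so that \eqref{RTT01} is needed only to kill the deterministic centering term. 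That input, which is where the whole difficulty of the non-type-$p$ setting lives, is absent from your sketch, and without it the argument does not go through.
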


The next lemma is Lemma 3.2 in \cite{RTT} which enables to study the
SLLN through the symmetrization procedures.
\begin{lem}[Rosalsky, Thanh and Thuy \cite{RTT}]\label{RTT32}
Let $\alpha>0,\beta>0$ and let $V=\{V_{mn},m\geq 1,n\geq1\}$ and
$V'=\{V_{mn}^{'},m\geq 1,n\geq 1\}$ be two double arrays of
independent random elements in a real separable Banach space such
that
 $V$ and $V'$ are independent copies of each other. Let $S^*_{mn}=\sum_{i=1}^m\sum_{j=1}^n(V_{ij}-V'_{ij})$. Then
 $$\dfrac{S_{mn}}{m^\alpha n^\beta}\to 0\
\mbox{a.s. as}\ m\vee n\to \infty$$ if and only if
 $$\dfrac{S^*_{mn}}{m^\alpha n^\beta}\to 0\ \mbox{a.s. as}
\ m\vee n\to \infty$$
 and $$\dfrac{S_{mn}}{m^\alpha n^\beta}
\overset{P}{\to}0\ \mbox{ as }\ m\vee n\to \infty.$$
\end{lem}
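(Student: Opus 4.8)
The plan is to prove the two implications separately; the forward direction is routine and the reverse (desymmetrization) direction is the substantive one. For the forward direction, assume $S_{mn}/(m^\alpha n^\beta)\to 0$ a.s.\ as $m\vee n\to\infty$. Since a.s.\ convergence in this mode implies convergence in probability, the stated condition $S_{mn}/(m^\alpha n^\beta)\overset{P}{\to}0$ holds. Moreover, because $V'$ is an independent copy of $V$, the array $S'_{mn}/(m^\alpha n^\beta)$, where $S'_{mn}=\sum_{i=1}^m\sum_{j=1}^n V'_{ij}$, has the same distribution as $S_{mn}/(m^\alpha n^\beta)$ and hence also converges to $0$ a.s.; subtracting the two a.s.\ limits and using $S^*_{mn}=S_{mn}-S'_{mn}$ gives $S^*_{mn}/(m^\alpha n^\beta)\to 0$ a.s.

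For the reverse direction, I would work on the product space carrying the independent copies and set $T_{mn}=S_{mn}/(m^\alpha n^\beta)$ and $T'_{mn}=S'_{mn}/(m^\alpha n^\beta)$, so that $S^*_{mn}/(m^\alpha n^\beta)=T_{mn}-T'_{mn}$. The convergence event can be written as
\[
\bigcap_{k\ge 1}\bigcup_{N\ge 1}\bigcap_{m\vee n> N}\big\{\|T_{mn}-T'_{mn}\|<1/k\big\},
\]
which is measurable and, by hypothesis, has full product measure. Fubini's theorem then produces a set of $\omega'$ of full measure such that, for each such $\omega'$, one has $T_{mn}(\omega)-T'_{mn}(\omega')\to 0$ for a.e.\ $\omega$, and in particular in probability.

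The key step is to show that, for each such fixed $\omega'$, the deterministic array $T'_{mn}(\omega')$ itself tends to $0$. For any $\varepsilon>0$,
\[
P\big(\|T'_{mn}(\omega')\|>2\varepsilon\big)\le P\big(\|T_{mn}\|>\varepsilon\big)+P\big(\|T_{mn}-T'_{mn}(\omega')\|>\varepsilon\big),
\]
and the right-hand side tends to $0$ as $m\vee n\to\infty$ by the assumed convergence in probability of $T_{mn}$ and by the in-probability convergence just obtained. Since the left-hand side is the indicator of a deterministic event, it must eventually vanish, forcing $\|T'_{mn}(\omega')\|\le 2\varepsilon$ for all large $m\vee n$; as $\varepsilon$ is arbitrary, $T'_{mn}(\omega')\to 0$. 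This holds for a.e.\ $\omega'$, so $T'_{mn}\to 0$ a.s., and since $V'$ is distributed identically to $V$, the same holds for $T_{mn}$, i.e.\ $S_{mn}/(m^\alpha n^\beta)\to 0$ a.s.

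The main obstacle I anticipate is the careful handling of the ``$m\vee n\to\infty$'' mode of convergence throughout: verifying that a.s.\ convergence implies convergence in probability in this mode (via the monotone tail-suprema $\sup_{m\vee n>N}\|\cdot\|$), confirming that the convergence event is expressed so that Fubini applies, and checking that the sandwich bound truly forces the deterministic indicator to vanish under this ``max'' limit rather than an ordinary sequential one. Once the deterministic array is pinned to $0$, the desymmetrization step --- transferring back to the original array by identical distribution --- is immediate.
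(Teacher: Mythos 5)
Your argument is correct: the forward direction is the routine observation that $S'_{mn}$ has the same law as $S_{mn}$ as an array, and your reverse direction is the standard Fubini desymmetrization (fix a good section $\omega'$, sandwich the deterministic array $T'_{mn}(\omega')$ between two quantities going to $0$ in probability, and conclude the indicator must eventually vanish), with the ``$m\vee n\to\infty$'' mode handled correctly throughout. Note that the paper itself gives no proof of this lemma --- it is imported as Lemma 3.2 of \cite{RTT} --- but your proof is essentially the argument used there, so there is nothing to add.
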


Lemma \ref{RTT34} considers the SLLN for double arrays of symmetric
independent random elements.
\begin{lem}[Rosalsky, Thanh and Thuy \cite{RTT}]\label{RTT34}
Let $\{V_{mn},m\geq 1,n\geq1\}$ be a double array of independent
symmetric random elements in a real separable Banach space. Then
$$\dfrac{S_{mn}}{mn}\to 0 \mbox{ a.s. as } m\vee n\to\infty$$
 if
and only if
$$\dfrac{\sum_{i=2^m+1}^{2^{m+1}}\sum_{j=2^n+1}^{2^{n+1}}V_{ij}}{2^m2^n}\to
0\mbox{ a.s. as } m\vee n\to\infty.$$
\end{lem}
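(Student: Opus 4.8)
The plan is to prove the two implications separately. The forward implication is elementary, while the reverse one carries the real content and rests on a two-parameter L\'evy-type maximal inequality together with the independence of the dyadic blocks.

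First I would treat the forward direction. Assume $S_{mn}/(mn)\to 0$ a.s. as $m\vee n\to\infty$. Writing the block sum through the inclusion--exclusion identity
$$\sum_{i=2^m+1}^{2^{m+1}}\sum_{j=2^n+1}^{2^{n+1}}V_{ij}=S_{2^{m+1},2^{n+1}}-S_{2^m,2^{n+1}}-S_{2^{m+1},2^n}+S_{2^m,2^n},$$
I divide by $2^m2^n$ and note that each of the four terms equals a bounded multiple of $S_{ab}/(ab)$ evaluated at a point $(a,b)$ with $a\vee b\to\infty$ (for instance $S_{2^{m+1},2^{n+1}}/(2^m2^n)=4\,S_{2^{m+1},2^{n+1}}/(2^{m+1}2^{n+1})$); hence each tends to $0$ a.s. and so does the block average.

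For the reverse direction write $B_{mn}$ for the $(m,n)$-th block sum and observe that the $B_{mn}$ are independent, being built from disjoint families of the $V_{ij}$. The first step is to upgrade the hypothesis $B_{mn}/(2^m2^n)\to 0$ a.s. to the summability $\sum_{m,n}P(\|B_{mn}\|>\varepsilon\,2^m2^n)<\infty$ for every $\varepsilon>0$: since convergence as $m\vee n\to\infty$ means that for each $\varepsilon$ the event $\{\|B_{mn}\|>\varepsilon\,2^m2^n\}$ occurs for only finitely many $(m,n)$ a.s., the converse Borel--Cantelli lemma applied to these independent events gives the summability. Next I introduce the block maxima
$$M_{mn}=\max_{2^m<k\le 2^{m+1}}\ \max_{2^n<l\le 2^{n+1}}\left\|\sum_{i=2^m+1}^{k}\sum_{j=2^n+1}^{l}V_{ij}\right\|,$$
and invoke a two-parameter L\'evy maximal inequality for symmetric random elements, $P(M_{mn}>\lambda)\le C\,P(\|B_{mn}\|>\lambda)$, obtained from the one-parameter reflection inequality applied successively in each coordinate. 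Combining this with the summability just established and the direct Borel--Cantelli lemma yields $M_{mn}/(2^m2^n)\to 0$ a.s. Finally I recover the full normed sum: for $(i,j)$ with $2^M\le i<2^{M+1}$ and $2^N\le j<2^{N+1}$, I decompose the rectangle $[1,i]\times[1,j]$ along the dyadic grid, so that each complete block contributes $B_{\mu\nu}$, each edge block a one-coordinate partial sum bounded by $M_{\mu\nu}$, and the single corner block a genuinely two-dimensional partial sum again bounded by $M_{MN}$; thus $\|S_{ij}\|\le\sum_{\mu=0}^{M}\sum_{\nu=0}^{N}M_{\mu\nu}$ up to the one-dimensional first-row and first-column terms, which are treated by the one-parameter form of the same device. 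Dividing by $ij\ge 2^M2^N$ and writing $M_{\mu\nu}=2^\mu2^\nu\eta_{\mu\nu}$ with $\eta_{\mu\nu}\to 0$ a.s., a two-dimensional Toeplitz (Kronecker) argument—splitting the grid into the region where $\eta$ is small and a fixed finite corner whose weight is annihilated by $A_{MN}=\sum_{\mu\le M,\nu\le N}2^\mu2^\nu\to\infty$—forces the average to $0$, i.e. $S_{ij}/(ij)\to 0$ a.s.

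I expect the main obstacle to lie entirely in the reverse direction, and within it at two points: establishing the genuinely two-parameter maximal inequality for symmetric Banach-space-valued elements with a finite constant (the naive single-coordinate iteration controls only the edge partial sums, and the corner block requires the full two-index version), and carrying out the Borel--Cantelli and Toeplitz bookkeeping with respect to the directed limit $m\vee n\to\infty$ rather than an ordinary sequential limit, so that convergence is controlled uniformly along every path to infinity.
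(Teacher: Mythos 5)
The paper does not actually prove this lemma: it is quoted from \cite{RTT}, so there is no in-paper proof to compare against. Your architecture for the substantive (reverse) implication is the natural two-parameter extension of the L\'evy--Etemadi argument — converse Borel--Cantelli on the independent block events, a maximal inequality inside each block, forward Borel--Cantelli for the block maxima, then dyadic reassembly plus a Toeplitz step — and most of it goes through. In particular, the two-parameter L\'evy inequality $P(M_{mn}>\lambda)\le 4\,P(\|B_{mn}\|>\lambda)$, which you flag as the main obstacle, is correct and is obtained cleanly by applying the one-dimensional L\'evy inequality to the row vectors $W_a=(V_{a,2^n+1},\dots,V_{a,2^{n+1}})$, viewed as independent symmetric elements of $\mathcal{X}^{2^n}$ under the norm $\|(x_1,\dots,x_{2^n})\|=\max_{l}\bigl\|\sum_{b\le l}x_b\bigr\|$, and then once more to the columns of the resulting full row sum; this is exactly the rigorous form of ``successive application in each coordinate'' and it does control the corner block. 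The forward implication via inclusion--exclusion is fine, as are the converse Borel--Cantelli step and the two-index Toeplitz bookkeeping.

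The genuine gap is the first row and first column. The blocks $(2^m,2^{m+1}]\times(2^n,2^{n+1}]$ tile only $[2,\infty)\times[2,\infty)$, so the hypothesis carries no information whatsoever about $\{V_{1n}\}$ and $\{V_{m1}\}$; the boundary terms $\sum_{b\le j}V_{1b}$ and $\sum_{a\le i}V_{a1}$ in your reassembly cannot be ``treated by the one-parameter form of the same device'' because there is no one-parameter hypothesis to feed into that device. This is not a removable technicality: take $\mathcal{X}=\mathbb{R}$, $V_{1n}=4^{n}r_n$ with $\{r_n\}$ i.i.d.\ Rademacher, and $V_{mn}=0$ for $m\ge 2$. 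Every dyadic block vanishes identically, yet $\|S_{mn}\|\ge 4^{n}/2$, so $\|S_{mn}\|/(mn)\to\infty$ along $m=n$. Hence the statement as transcribed here cannot be proved as it stands; to close your argument one must either re-index the blocks so that they cover index $1$ (e.g.\ $[2^{m},2^{m+1})$ with $m\ge 0$) or add a supplementary hypothesis controlling row one and column one (in the applications in this paper such control is available from \eqref{31} or from \eqref{22b} together with the convergence in probability). You should consult the precise formulation in \cite{RTT} and adjust the block boundaries accordingly; apart from this boundary issue your outline is complete.
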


The last lemma is a simple consequence of Lemma 1 of Etemadi
\cite{Etemadi85b}.
\begin{lem}\label{Ete85} Let $X$ and $Y$ be two independent symmetric
random elements in a real separable Banach space. Then for all
$t>0$,
\begin{equation*}P(\|X\|>t)\leq 2P(\|X+Y\|>t).
\end{equation*}
\end{lem}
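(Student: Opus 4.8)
The plan is to exploit the elementary decomposition $2X=(X+Y)+(X-Y)$ together with the fact that the symmetry of $Y$ forces $X+Y$ and $X-Y$ to be identically distributed. This yields a clean two-step argument that bypasses the general machinery of Etemadi's lemma in this simple setting.

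First I would observe, via the triangle inequality, that
\begin{equation*}
2\|X\|=\|(X+Y)+(X-Y)\|\le \|X+Y\|+\|X-Y\|.
\end{equation*}
Consequently, if both $\|X+Y\|\le t$ and $\|X-Y\|\le t$ held, then $\|X\|\le t$; contrapositively, on the event $\{\|X\|>t\}$ at least one of $\|X+Y\|>t$ or $\|X-Y\|>t$ must occur. This gives the inclusion
\begin{equation*}
\{\|X\|>t\}\subseteq\{\|X+Y\|>t\}\cup\{\|X-Y\|>t\},
\end{equation*}
so that by the subadditivity of $P$,
\begin{equation*}
P(\|X\|>t)\le P(\|X+Y\|>t)+P(\|X-Y\|>t).
\end{equation*}

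The second step is to show that the two terms on the right coincide. Since $Y$ is symmetric, $-Y$ and $Y$ have the same law; and since $X$ and $Y$ are independent, the pairs $(X,Y)$ and $(X,-Y)$ are identically distributed. Applying the (continuous, hence measurable) map $(x,y)\mapsto x+y$ then shows that $X+Y$ and $X-Y$ have the same distribution, whence $P(\|X+Y\|>t)=P(\|X-Y\|>t)$. Substituting this equality into the previous display gives $P(\|X\|>t)\le 2P(\|X+Y\|>t)$, as required.

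The argument is essentially routine, and the only point genuinely requiring care is the equality of the two distributions: it uses both the independence of $X$ and $Y$ and the symmetry of $Y$, since symmetry of $Y$ alone, without independence from $X$, would not license replacing $Y$ by $-Y$ jointly with $X$. Separability of the Banach space enters only to guarantee measurability of $\|\cdot\|$ and of the random sums under consideration, so no further structural hypotheses are needed.
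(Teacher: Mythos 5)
Your argument is correct and complete. The paper itself offers no proof of this lemma: it is simply stated to be a simple consequence of Lemma 1 of Etemadi \cite{Etemadi85b}, so the comparison here is between your self-contained derivation and an appeal to the literature. Your route --- the decomposition $2X=(X+Y)+(X-Y)$, the triangle inequality giving the inclusion $\{\|X\|>t\}\subseteq\{\|X+Y\|>t\}\cup\{\|X-Y\|>t\}$, and the distributional identity of $X+Y$ and $X-Y$ obtained from $(X,Y)\overset{d}{=}(X,-Y)$ --- is the standard elementary proof of precisely this symmetrization inequality, and every step is justified. You are right to flag that the key point is that independence and the symmetry of $Y$ must be used \emph{jointly} to replace $Y$ by $-Y$ in the pair $(X,Y)$; symmetry of $Y$ alone would not suffice. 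One small observation worth recording: your proof never uses the symmetry of $X$, so you have in fact established the inequality under the weaker hypothesis that $X$ and $Y$ are independent and only $Y$ is symmetric; this costs nothing for the applications in the paper, where both arrays are symmetric.
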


\section{Main Results}
With the preliminaries accounted for, the first main result may be
established. Theorem \ref{thm31} provides a new characterization of
Rademacher type $p$ Banach spaces through the complete convergence
in mean of order $p$ for normed double sums.

\begin{thm}\label{thm31}
Let $ 1\leq p\leq 2$ and let $\mathcal{X}$ be a real separable
Banach space. Then the following statements are equivalent:
\begin{description}
\item{(i)} $\mathcal{X}$ is of Rademacher type $p$.

\item{(ii)}  For every double array $\{V_{mn},m\geq 1,n\geq 1\}$ of independent
 mean 0 random elements in $\mathcal{X}$, the condition
 \begin{equation}\label{31}\sum_{m=1}^\infty\sum_{n=1}^\infty
 \dfrac{E\|V_{mn}\|^p}{m^pn^p}<\infty
 \end{equation}
 implies
 \begin{equation}\label{32}\dfrac{S_{mn}}{(mn)^{(p+1)/p}}
 \overset{c,L_p}{\to}0.
 \end{equation}
 \end{description}
\end{thm}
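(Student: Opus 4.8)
The plan is to establish the two implications separately. It is convenient to first unpack the conclusion: by the definition of $\overset{c,L_p}{\to}0$, statement \eqref{32} is exactly the assertion that $\sum_{m=1}^\infty\sum_{n=1}^\infty E\|S_{mn}\|^p/(mn)^{p+1}<\infty$.

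For (i) $\Rightarrow$ (ii), the routine direction, I would fix $(m,n)$ and regard $\{V_{ij}:1\le i\le m,\ 1\le j\le n\}$ as a single finite collection of $mn$ independent mean $0$ random elements. The Hoffmann--J{\o}rgensen--Pisier inequality \eqref{HJP} then yields $E\|S_{mn}\|^p\le C\sum_{i=1}^m\sum_{j=1}^n E\|V_{ij}\|^p$. Substituting this bound into the series above and interchanging the order of summation (legitimate by Tonelli, as all terms are nonnegative), the coefficient of each $E\|V_{ij}\|^p$ becomes $\big(\sum_{m\ge i}m^{-(p+1)}\big)\big(\sum_{n\ge j}n^{-(p+1)}\big)$. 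The elementary tail estimate $\sum_{m\ge i}m^{-(p+1)}\le C\,i^{-p}$ (valid since $p\ge 1$) shows this coefficient is at most $C\,i^{-p}j^{-p}$, so the whole series is dominated by $C\sum_{i,j}E\|V_{ij}\|^p/(i^pj^p)$, which is finite by \eqref{31}.

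For (ii) $\Rightarrow$ (i) I would argue by contraposition, and I expect this to be the main obstacle. First I would reduce to a single column: given a sequence $\{W_i\}$ of independent mean $0$ elements, set $V_{i1}=W_i$ and $V_{ij}=0$ for $j\ge 2$, so that $S_{mn}=\sum_{i=1}^m W_i=:S_m$ for every $n$. Since $\sum_n n^{-p}$ and $\sum_n n^{-(p+1)}$ both converge, condition \eqref{31} collapses to $\sum_m E\|W_m\|^p/m^p<\infty$ and \eqref{32} collapses to $\sum_m E\|S_m\|^p/m^{p+1}<\infty$; thus (ii) forces the corresponding single-sum implication, which is the characterization of Rademacher type $p$ obtained in \cite{RTV}. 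It therefore suffices to show that this single-sum implication fails whenever $\mathcal{X}$ is not of type $p$.

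To construct the witnessing sequence I would use that if $\mathcal{X}$ is not of Rademacher type $p$, then \eqref{HJP} fails for every constant: for each $k$ there is a finite collection of independent mean $0$ elements whose sum has $p$th moment exceeding $k$ times the sum of the individual $p$th moments. I would take blocks $B_k=(2^{k-1},2^k]$, place on $B_k$ a scaled copy of the $k$th failing collection, and normalize so that $\sum_{i\in B_k}E\|W_i\|^p=2^{kp}/k^2$. The fixed geometric ratio keeps $2^{kp}/2^{(k-1)p}$ bounded, so the input series contributes at most $2^p/k^2$ per block, giving \eqref{31}. For the output I would invoke the conditional-Jensen argument underlying \lemref{RTV}, namely that for independent mean $0$ summands $E\|S_m\|^p$ dominates the $p$th moment of any sub-block sum; hence $E\|S_m\|^p\ge E\|\sum_{i\in B_k}W_i\|^p> k\cdot 2^{kp}/k^2$ for all $m>2^k$. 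Summing $m^{-(p+1)}$ over $B_{k+1}$ contributes at least $C_p\,2^{-kp}$, so $\sum_m E\|S_m\|^p/m^{p+1}\gtrsim\sum_k k^{-1}=\infty$, violating \eqref{32}. The delicate point, and the step I expect to require the most care, is the simultaneous balancing of the block scaling $2^{kp}/k^2$ so that the input series converges while the amplified output series diverges; the remainder is bookkeeping.
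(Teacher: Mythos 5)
Your first implication ((i)$\Rightarrow$(ii)) is exactly the paper's argument: apply \eqref{HJP} to the finite collection $\{V_{ij}: i\le m, j\le n\}$, interchange summation by Tonelli, and use the tail estimate $\sum_{m\ge i}m^{-(p+1)}\le Ci^{-p}$ (the paper's \eqref{33}). For ((ii)$\Rightarrow$(i)) you diverge completely from the paper. The paper never constructs a counterexample: it takes an arbitrary array satisfying \eqref{31}, uses (ii) to get \eqref{35}, deduces $E\|S_{mn}/(mn)\|^p\to 0$ via the monotonicity in \lemref{RTV} together with \eqref{33}, hence convergence in probability, and then invokes \propref{RTT} to upgrade to the SLLN and \propref{RT07} to conclude that $\mathcal{X}$ is of Rademacher type $p$. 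Your route --- collapse to a single column and either quote the single-sum characterization of \cite{RTV} or build an explicit witness in a non-type-$p$ space --- is legitimate in principle and, if you simply cite \cite{RTV} for the single-index equivalence, it is arguably shorter; but as written it has two genuine problems.

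First, the reduction itself: you assert that $\sum_n n^{-p}$ converges, which is false at $p=1$. The implication is vacuous there (every Banach space is of Rademacher type $1$), but you must say so, since your collapse of \eqref{31} to $\sum_m E\|W_m\|^p/m^p<\infty$ only makes sense for $p>1$. Second, and more seriously, the block construction does not work as described. The negation of type $p$ gives you, for each $k$, a finite collection of some size $N_k$ violating \eqref{HJP} with constant $k$, and you have no control whatsoever over $N_k$; your block $B_k=(2^{k-1},2^k]$ has only $2^{k-1}$ slots, so the $k$th collection need not fit. Regrouping the $N_k$ elements into $2^{k-1}$ partial sums does not obviously preserve the ratio $E\|\sum_j U_j\|^p/\sum_j E\|U_j\|^p$, so the fix is not cosmetic: you must place the $k$th collection in an adaptively chosen block $(M_k,M_k+N_k]$ with $M_k\ge N_k$ and the $M_k$ growing fast enough that the weight $m^{-p}$ is essentially constant across each block and the output ranges $(M_k+N_k,2(M_k+N_k)]$ are disjoint, then redo the normalization $\sigma_k=M_k^p/k^2$ and both estimates. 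You also need to state that the collections for distinct $k$ are taken mutually independent. The remaining ingredients (scaling preserves the violating ratio; $E\|S_m\|^p\ge E\|\sum_{i\in B_k}W_i\|^p$ by the conditional Jensen argument behind \lemref{RTV}) are sound. So the gap is repairable, but the ``bookkeeping'' you defer is precisely where the argument currently fails.
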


\begin{proof}
 Note that
 \begin{equation}\label{33}\sum_{m=i}^\infty\sum_{n=j}^\infty \dfrac{1}{(mn)^{p+1}}\approx
 \dfrac{1}{p^2}\dfrac{1}{(ij)^p}.
 \end{equation}
 Assume that (i) holds. Let $\{V_{mn},m\geq 1,n\geq 1\}$ be a
 double array
  of independent mean $0$ random elements in $\mathcal{X}$ satisfying \eqref{31}. Then

\begin{align*}\sum_{m=1}^\infty\sum_{n=1}^\infty E\Big\| \dfrac{S_{mn}}{(mn)^{(p+1)/p}}\Big\|^p
&\leq C\sum_{m=1}^\infty\sum_{n=1}^\infty \dfrac{\sum_{i=1}^m\sum_{j=1}^n E\|V_{ij}\|^p}{(mn)^{p+1}}\mbox{ (by \eqref{HJP})}\\
& =C\sum_{i=1}^\infty\sum_{j=1}^\infty\sum_{m=i}^\infty\sum_{n=j}^\infty\dfrac{E\|V_{ij}\|^p}{(mn)^{p+1}}\\
&=C\sum_{i=1}^\infty\sum_{j=1}^\infty E\|V_{ij}\|^p\sum_{m=i}^\infty\sum_{n=j}^\infty \dfrac{1}{(mn)^{p+1}}\\
& \leq C \sum_{i=1}^\infty\sum_{j=1}^\infty
\dfrac{1}{p^2}\dfrac{E\|V_{ij}\|^p}{(ij)^p}\mbox{ (by
\eqref{33})}\\
&<\infty \mbox{ (by \eqref{31})}.
\end{align*}
 So \eqref{32} is proved. This ends the proof of the implication ((i)$\Rightarrow$(ii)).

 Now, assume that (ii) holds. Let $\{V_{mn},m\geq 1,n\geq1\}$ be a double array of
    independent mean $0$ random elements in $\mathcal{X}$ such that \eqref{31} holds.
    In view of Proposition \ref{RT07}, it suffices to verify that
\begin{equation}\label{34}\dfrac{S_{mn}}{mn}\to 0 \mbox{ a.s. as }m\vee n\to
\infty.
\end{equation}
Now \eqref{32} holds by \eqref{31} and (ii) and so
\begin{equation}\label{35}\sum_{m=1}^\infty \sum_{n=1}^\infty \dfrac{E\|S_{mn}\|^p}{(mn)^{p+1}}
<\infty.
\end{equation}
Thus
\begin{align*}
E\left\|\dfrac{S_{mn}}{mn}\right\|^p&=\dfrac{1}{(mn)^p}E\|S_{mn}\|^p\\
&\leq C \sum_{k=m}^\infty\sum_{l=n}^\infty \dfrac{1}{(kl)^{p+1}}E\|S_{mn}\|^p \mbox{ (by\eqref{33}) }\\
& \leq C\sum_{k=m}^\infty\sum_{l=n}^\infty
\dfrac{1}{(kl)^{p+1}}E\|S_{kl}\|^p \mbox{ (by Lemma \ref{RTV})}\\
&\to 0 \mbox{ as } m\vee n\to \infty \mbox{ (by \eqref{35})}.
\end{align*}
Then by Markov's inequality $\dfrac{S_{mn}}{mn}\overset{P}{\to} 0$
as $m\vee n\to\infty$ and so \eqref{34} holds by Proposition
\ref{RTT}. The proof of the implication ((ii)$\Rightarrow$(i)) is
completed.
\end{proof}

\begin{rem}\label{rm1}{\rm
From the proof of Theorem \ref{thm31}, we see that if \eqref{35}
holds for some $p\ge 1$, we obtain
$$\dfrac{S_{mn}}{mn}{\to }\ 0\mbox{ in $L_p$ as }m\vee n\to\infty.$$ This remark will be used in the proof of Theorem \ref{thm32}.}
\end{rem}

In the following theorem, we show that
$S_{mn}/(mn)^{(p+1)/p}\overset{c, L_p}{\to} 0 \mbox{ for some } p\ge
1$ implies $S_{mn}/(mn)\to 0 \mbox{ a.s. as } m\vee n\to
\infty.$ We emphasize that we are not assuming that the Banach space
is of Rademacher type $p$.

\begin{thm}\label{thm32}
Let $\{V_{mn},m\geq 1,n\geq 1\}$ be a double array of independent
random elements in a real separable Banach space. If
\begin{equation}\label{38} \dfrac{S_{mn}}{(mn)^{(p+1)/p}}\overset{c, L_p}{\to} 0
\mbox{ for some } p\ge 1,
\end{equation}
then
\begin{equation}\label{39}\dfrac{S_{mn}}{mn}\to 0 \mbox{ a.s. as } m\vee n\to
\infty.
\end{equation}
\end{thm}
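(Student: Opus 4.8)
The plan is to read \eqref{38} as the summability statement \eqref{35}, that is, $\sum_{m,n}E\|S_{mn}\|^p/(mn)^{p+1}<\infty$, and then to deduce \eqref{39} by means of the symmetrization device in \lemref{RTT32}. Taking $\alpha=\beta=1$ there, it suffices to establish two facts: (a) the symmetric SLLN $S^*_{mn}/(mn)\to 0$ a.s. as $m\vee n\to\infty$, where $S^*_{mn}=\sum_{i\le m}\sum_{j\le n}(V_{ij}-V'_{ij})$ for an independent copy $\{V'_{ij}\}$ of $\{V_{ij}\}$, and (b) the WLLN $S_{mn}/(mn)\overset{P}{\to}0$ as $m\vee n\to\infty$. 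I first note that \eqref{35} forces $E\|S_{mn}\|^p<\infty$ for every $(m,n)$, whence $E\|V_{mn}\|^p<\infty$ and the Pettis means $EV_{mn}$ exist.

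For (a) I would symmetrize. The array $\{V_{mn}-V'_{mn}\}$ is independent and symmetric, and the convexity bound $E\|S^*_{mn}\|^p\le 2^pE\|S_{mn}\|^p$ shows that \eqref{35} persists with $S^*_{mn}$ in place of $S_{mn}$. Since symmetric random elements are mean $0$, \lemref{RTV}, applied in each coordinate separately, yields that $E\|S^*_{mn}\|^p$ is nondecreasing in $m$ and in $n$. By \lemref{RTT34} the symmetric SLLN reduces to $T^*_{mn}/(2^m2^n)\to0$ a.s., where $T^*_{mn}=\sum_{i=2^m+1}^{2^{m+1}}\sum_{j=2^n+1}^{2^{n+1}}(V_{ij}-V'_{ij})$. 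Writing $T^*_{mn}$ as the alternating sum of the four dyadic corner sums $S^*_{2^{m+1},2^{n+1}},S^*_{2^{m},2^{n+1}},S^*_{2^{m+1},2^{n}},S^*_{2^{m},2^{n}}$ and invoking the monotonicity, I would bound $E\|T^*_{mn}\|^p\le 4^pE\|S^*_{2^{m+1},2^{n+1}}\|^p$; Markov's inequality then gives $\sum_{m,n}P(\|T^*_{mn}\|>\varepsilon\,2^m2^n)\le C\varepsilon^{-p}\sum_{m,n}E\|S^*_{2^{m+1},2^{n+1}}\|^p/(2^m2^n)^p$. The last series is compared with the full series in \eqref{35}: each dyadic corner value dominates a block of $2^m2^n$ terms $E\|S^*_{kl}\|^p/(kl)^{p+1}$ (again by monotonicity), so it is at most $C\sum_{k,l}E\|S^*_{kl}\|^p/(kl)^{p+1}<\infty$. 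Thus the blocks converge completely, hence a.s., proving (a); \lemref{Ete85} is available to pass from the corner sums to the intermediate sums should the reduction of \lemref{RTT34} need reinforcing.

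For (b) I would center. With $\tilde V_{mn}=V_{mn}-EV_{mn}$ the array $\{\tilde V_{mn}\}$ is independent with mean $0$, and $E\|\tilde S_{mn}\|^p\le 2^pE\|S_{mn}\|^p$ shows that \eqref{35} again holds for $\tilde S_{mn}=S_{mn}-ES_{mn}$. Since $\{\tilde V_{mn}\}$ has mean $0$, the monotonicity of \lemref{RTV} together with the estimate \eqref{33} reproduces exactly the computation in \remref{rm1}, so $\tilde S_{mn}/(mn)\to0$ in $L_p$ and a fortiori $\tilde S_{mn}/(mn)\overset{P}{\to}0$. Consequently (b) is equivalent to the deterministic statement $ES_{mn}/(mn)\to0$ as $m\vee n\to\infty$, which I would extract from the complete convergence in mean hypothesis \eqref{38}. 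Granting (a) and (b), \lemref{RTT32} delivers \eqref{39}.

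The principal obstacle is that $\mathcal{X}$ is not assumed to be of Rademacher type $p$, so inequality \eqref{HJP} and the clean route through \thmref{thm31} and \propref{RT07} are unavailable; this is exactly why one passes to the symmetric array, where \lemref{RTV}, \lemref{RTT34} and \lemref{Ete85} can be brought to bear, with \propref{RTT} relating the resulting weak and strong laws. Within this scheme the delicate step is the weak law (b): the mean-zero part is controlled by \remref{rm1}, but pinning down the deterministic centerings $ES_{mn}$ at the scale $mn$---rather than at the weaker scale $(mn)^{(p+1)/p}$ that \eqref{38} supplies directly---is where the full force of complete convergence in mean, as opposed to mere summability of $E\|S_{mn}\|^p/(mn)^{p+1}$, must be used.
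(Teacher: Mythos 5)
Your architecture --- reading \eqref{38} as \eqref{35} and then feeding \lemref{RTT32} with (a) the symmetric SLLN and (b) the WLLN --- is legitimate, and your part (a) is correct and in fact more direct than the paper's: the paper does not invoke \lemref{RTT32} straight away but first proves two new Etemadi-type results (Lemmas \ref{lem33} and \ref{lem34}), reduces \eqref{39} to the WLLN plus the summability of $\sum_{m,n}(mn)^{-1}P\big(\big\|\sum_{i=m+1}^{2m}\sum_{j=n+1}^{2n}V_{ij}\big\|>\varepsilon mn\big)$, and checks the latter by Markov's inequality after writing the block as $S_{(2m,2n)}-S_{(2m,n)}-S_{(m,2n)}+S_{(m,n)}$. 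Your dyadic-block argument for the symmetrized array (via \lemref{RTT34}, the four-corner decomposition, conditional Jensen for the monotonicity of $E\|S^*_{mn}\|^p$, and the comparison of the dyadic series with \eqref{35}) is sound and bypasses Lemmas \ref{lem33}--\ref{lem34} entirely.

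The genuine gap is in part (b), exactly where you flag it: you need $ES_{mn}/(mn)\to 0$, you do not prove it, and it does not follow from \eqref{38}. Indeed the implication is false. Take $\mathcal{X}=\mathbb{R}$, $V_{mn}=0$ for $n\ge 2$, and let $V_{m1}$ be the degenerate (hence independent) random variables whose partial sums are $b_m=m$ when $m=2^{2^k}$ for some $k$ and $b_m=0$ otherwise. Then $S_{mn}=b_m$ and
\begin{equation*}
\sum_{m=1}^\infty\sum_{n=1}^\infty \frac{E|S_{mn}|^p}{(mn)^{p+1}}
=\Big(\sum_{n=1}^\infty n^{-p-1}\Big)\Big(\sum_{k=1}^\infty 2^{-2^k}\Big)<\infty ,
\end{equation*}
so \eqref{38} holds for every $p\ge 1$, yet $S_{m,1}/m=1$ along $m=2^{2^k}$ and \eqref{39} fails. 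So no argument can supply your missing step; a mean-zero (or centering) hypothesis is genuinely needed. It is worth noting that the paper's own proof conceals the same difficulty: it obtains the WLLN by applying \remref{rm1} to the uncentered array, but the computation behind \remref{rm1} rests on the monotonicity of \lemref{RTV}, which is stated (and is only true) for mean-zero summands. Your more careful treatment, by isolating the deterministic centerings, makes the defect visible. If one adds $EV_{mn}=0$ to the hypotheses, then $ES_{mn}=0$, your part (b) follows at once from \remref{rm1} applied to the (already centered) array, and both your proof and the paper's close.
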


\begin{rem}\label{rm2} {\rm
\begin{description}
\item{(i)}  In \cite{RTV},
Rosalsky, Thanh and Volodin established Theorem \ref{thm32} for
$1$-dimensional case with $1\le p\le 2$. The proof we presented here for
the double sum version is much more complicated.
As we will see in the proof
that the condition $p\le 2$ is not needed.

\item{(ii)} Recently, Son, Thang and Dung \cite{STD} proved a result on
complete convergence in mean of order $p$ without assuming that the
summands are independent. More precise, they proved that for arbitrary
double array $\{V_{mn},m\ge 1,n\ge 1\}$ in a real separable Banach
space, the condition
$$\dfrac{1}{(mn)^{(p+1)/p}}\max_{k\le m,l\le n}\|S_{kl}\|\overset{c, L_p}{\to} 0
\mbox{ for some } 1\le p\le 2$$ implies
$$\dfrac{1}{mn}\max_{k\le m,l\le n}\|S_{kl}\|\to 0 \mbox{ a.s. as } m\vee n\to
\infty.$$ Their result and ours are not comparable and do not imply
each other, and our proof is completely different from theirs.
Moreover, we will show in Example \ref{exm42} that in our Theorem
\ref{thm32}, the independence assumption cannot be weakened to the
assumption that the random elements are pairwise independent.
\end{description}
 }
\end{rem}

 The proof of Theorem \ref{thm32} has several
steps so we will break it up into two lemmas.  These lemmas may be
of independent interest. The first lemma provides a necessary and
sufficient condition for SLLN $S_{mn}/(mn)\to 0 \mbox{ a.s. as }
m\vee n\to \infty$ when $\{V_{mn},m\geq 1, n\geq 1\}$  is comprised
of independent symmetric random elements. Lemma \ref{lem33} is a
double sum analogue of Theorem 1 of Etemadi \cite{Etemadi85a}.

\begin{lem}\label{lem33}Let $\{V_{mn},m\geq 1,n\geq 1\}$ be a double array of independent
symmetric random elements in a real separable Banach space. Then
\begin{equation}\label{36}\dfrac{S_{mn}}{mn}\to 0 \mbox{ a.s. as } m\vee n \to \infty
\end{equation}
if and only if
\begin{equation}\label{37}\sum_{m=1}^\infty\sum_{n=1}^\infty \dfrac{1}{mn}P\left(\left\|\sum_{i=m+1}^{2m}\sum_{j=n+1}^{2n}V_{ij}\right\|
> \varepsilon mn\right)<\infty \mbox{ for all }\varepsilon>0.
\end{equation}
\end{lem}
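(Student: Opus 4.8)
The plan is to sandwich the almost-sure statement \eqref{36} between a Borel--Cantelli series over independent dyadic blocks and the weighted series \eqref{37}. Throughout I write
\begin{equation*}
U_{mn}=\sum_{i=m+1}^{2m}\sum_{j=n+1}^{2n}V_{ij},\qquad T_{kl}=\sum_{i=2^{k}+1}^{2^{k+1}}\sum_{j=2^{l}+1}^{2^{l+1}}V_{ij}=U_{2^{k},2^{l}},
\end{equation*}
so that $\{T_{kl}\}$ is an array of independent symmetric random elements, its defining rectangles being pairwise disjoint. The key intermediate condition is
\begin{equation*}
(\star)\qquad \sum_{k}\sum_{l}P\big(\|T_{kl}\|>\varepsilon\,2^{k+l}\big)<\infty\quad\text{for every }\varepsilon>0 .
\end{equation*}
First I would prove $\eqref{36}\Leftrightarrow(\star)$. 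By Lemma~\ref{RTT34}, \eqref{36} is equivalent to $T_{kl}/2^{k+l}\to0$ a.s.\ as $k\vee l\to\infty$. Because the $T_{kl}$ are independent, both Borel--Cantelli lemmas apply: the first gives the ``if'' implication, and the second gives the ``only if'' implication (for each fixed level the tail events remain independent and their probabilities still sum to infinity), so $T_{kl}/2^{k+l}\to0$ a.s.\ is equivalent to $(\star)$ after intersecting over $\varepsilon\downarrow0$. This reduction is routine.

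For $(\star)\Rightarrow\eqref{37}$ I would group the terms of \eqref{37} into the blocks $2^{k}\le m<2^{k+1}$, $2^{l}\le n<2^{l+1}$. For such $(m,n)$ one has $[m+1,2m]\times[n+1,2n]\subseteq[2^{k}+1,2^{k+2}]\times[2^{l}+1,2^{l+2}]$, hence $U_{mn}$ is a sub-sum (with independent symmetric complement) of $W_{kl}:=T_{k,l}+T_{k,l+1}+T_{k+1,l}+T_{k+1,l+1}$. Lemma~\ref{Ete85} together with $mn\ge 2^{k+l}$ gives $P(\|U_{mn}\|>\varepsilon mn)\le 2P(\|W_{kl}\|>\varepsilon 2^{k+l})$, and a union bound dominates the right-hand side by $\sum P(\|T_{k'l'}\|>c\varepsilon\,2^{k'+l'})$ over the four pairs $(k',l')\in\{k,k+1\}\times\{l,l+1\}$, for a fixed constant $c>0$. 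Since $\sum_{2^{k}\le m<2^{k+1}}\sum_{2^{l}\le n<2^{l+1}}(mn)^{-1}\le C$ and each $T_{k'l'}$ occurs for only boundedly many $(k,l)$, summing shows that \eqref{37} is bounded by a constant multiple of the series in $(\star)$ (with $\varepsilon$ rescaled), hence is finite.

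The main obstacle is the reverse comparison $\eqref{37}\Rightarrow(\star)$, since $U_{mn}$ and the fixed blocks $T_{kl}$ do not nest: for $(m,n)\ne(2^{k},2^{l})$ no $U_{mn}$ contains, or is contained in, $T_{kl}$, so Lemma~\ref{Ete85} cannot be applied directly. I would circumvent this by halving the two index ranges of $T_{kl}$ at $\tfrac32 2^{k}$ and $\tfrac32 2^{l}$, writing $T_{kl}=\sum_{a,b\in\{1,2\}}Q^{(k,l)}_{ab}$ as a sum of four quadrants. The point is that each quadrant is simultaneously a sub-sum of $U_{mn}$ for every $(m,n)$ in a whole product-interval $R^{(k,l)}_{ab}$ of cardinality $\asymp 2^{k+l}$; for instance $Q^{(k,l)}_{11}$ is a sub-sum of $U_{mn}$ for all $\tfrac34 2^{k}\le m\le 2^{k}$ and $\tfrac34 2^{l}\le n\le 2^{l}$, as a direct check of the inclusion $[2^{k}+1,\tfrac32 2^{k}]\subseteq[m+1,2m]$ shows.

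With this device in hand I would finish as follows. Fixing a quadrant type $(a,b)$, Lemma~\ref{Ete85} gives $P(\|Q^{(k,l)}_{ab}\|>\tfrac\varepsilon4 2^{k+l})\le 2P(\|U_{mn}\|>\tfrac\varepsilon4 2^{k+l})$ for each $(m,n)\in R^{(k,l)}_{ab}$; averaging over $R^{(k,l)}_{ab}$, using $\tfrac\varepsilon4 2^{k+l}\ge c_{0}\,mn$ and $|R^{(k,l)}_{ab}|^{-1}\asymp 2^{-(k+l)}\asymp (mn)^{-1}$ on that set (with $c_0>0$ a fixed multiple of $\varepsilon$), yields
\begin{equation*}
P\big(\|Q^{(k,l)}_{ab}\|>\tfrac\varepsilon4 2^{k+l}\big)\le C\sum_{(m,n)\in R^{(k,l)}_{ab}}\frac{1}{mn}\,P\big(\|U_{mn}\|>c_{0}\,mn\big).
\end{equation*}
Because for each fixed $(a,b)$ the index-regions $R^{(k,l)}_{ab}$ are pairwise disjoint as $(k,l)$ varies, summing this over $(k,l)$ and over the four quadrant types, together with $P(\|T_{kl}\|>\varepsilon 2^{k+l})\le\sum_{a,b}P(\|Q^{(k,l)}_{ab}\|>\tfrac\varepsilon4 2^{k+l})$, bounds the series in $(\star)$ by a constant multiple of \eqref{37} (with $\varepsilon$ replaced by $c_{0}$), which is finite. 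Combined with $(\star)\Leftrightarrow\eqref{36}$ this gives both implications of the lemma. The rounding at the splitting points $\tfrac32 2^{k}$ and the finitely many small indices $(k,l)$ affect only constants and are harmless.
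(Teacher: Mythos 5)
Your proposal is correct and follows essentially the same route as the paper's proof: both directions rest on Lemma~\ref{RTT34}, the Borel--Cantelli lemmas over the independent dyadic blocks, and Lemma~\ref{Ete85} applied to sub-sums, with your quadrant splitting of $T_{kl}$ at $\tfrac32 2^k$, $\tfrac32 2^l$ and the index-regions $R^{(k,l)}_{ab}$ being exactly the paper's decomposition into the four sums over $m\in(2^k+2^{k-1},2^{k+1}]\cup(2^{k+1},2^{k+1}+2^k]$ and the analogous $n$-ranges. The only difference is organizational (you factor the argument through the explicit intermediate condition $(\star)$), and the boundary cases you defer are indeed harmless.
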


\begin{proof}
Assume that \eqref{36} holds and let $\varepsilon>0$ be arbitray. It
is easy to see that \eqref{36} implies
\begin{equation*}
\dfrac{\sum_{i=2^k+1}^{2^{k+1}}\sum_{j=2^{l}+1}^{2^{l+1}}V_{ij}}{2^k2^l}\to
0\mbox{ a.s. as } k\vee l\to\infty.
\end{equation*}
By the Borel-Cantelli lemma, it implies
\begin{equation}\label{3a}
\sum_{k=0}^\infty\sum_{l=0}^\infty
P\left(\left\|\sum_{i=2^k+1}^{2^{k+1}}\sum_{j=2^{l}+1}^{2^{l+1}}V_{ij}
\right\|>\dfrac{\varepsilon}{4} 2^k2^{l}\right)<\infty.
\end{equation}
Similarly, we have
\begin{equation}\label{3b}
\sum_{k=0}^\infty\sum_{l=0}^\infty
P\left(\left\|\sum_{i=2^{k+1}+1}^{2^{k+2}}\sum_{j=2^{l}+1}^{2^{l+1}}V_{ij}
\right\|>\dfrac{\varepsilon}{4} 2^k2^{l}\right)<\infty,
\end{equation}
\begin{equation}\label{3c}
\sum_{k=0}^\infty\sum_{l=0}^\infty
P\left(\left\|\sum_{i=2^k+1}^{2^{k+1}}\sum_{j=2^{l+1}+1}^{2^{l+2}}V_{ij}
\right\|>\dfrac{\varepsilon}{4} 2^k2^{l}\right)<\infty,
\end{equation}
and
\begin{equation}\label{3d}
\sum_{k=0}^\infty\sum_{l=0}^\infty
P\left(\left\|\sum_{i=2^{k+1}+1}^{2^{k+2}}\sum_{j=2^{l+1}+1}^{2^{l+2}}V_{ij}
\right\|>\dfrac{\varepsilon}{4} 2^k2^{l}\right)<\infty.
\end{equation}
Now, we have
\begin{align*}&\sum_{m=2}^\infty\sum_{n=2}^\infty \dfrac{1}{mn}P\left(\left\|\sum_{i=m+1}^{2m}\sum_{j=n+1}^{2n}V_{ij}\right\|>\varepsilon
mn\right)\\
&\quad\quad=\sum_{k=0}^\infty\sum_{l=0}^\infty\sum_{m=2^k+1}^{2^{k+1}}\sum_{n=2^l+1}^{2^{l+1}}
\dfrac{1}{mn}P\left(\left\|\sum_{i=m+1}^{2m}\sum_{j=n+1}^{2n}V_{ij}\right\|>\varepsilon mn\right)\\
&\quad\quad \leq 2\sum_{k=0}^\infty\sum_{l=0}^\infty
\dfrac{1}{2^k2^l}\sum_{m=2^k+1}^{2^{k+1}}\sum_{n=2^l+1}^{2^{l+1}}
P\left(\left\|\sum_{i=2^k+1}^{2^{k+2}}\sum_{j=2^l+1}^{2^{l+2}}V_{ij}\right\|>\varepsilon 2^k2^l\right)\mbox{ (by Lemma \ref{Ete85})}\\
&\quad\quad = 2\sum_{k=0}^\infty\sum_{l=0}^\infty
P\left(\left\|\sum_{i=2^k+1}^{2^{k+2}}\sum_{j=2^l+1}^{2^{l+2}}
V_{ij}\right\|>\varepsilon 2^k 2^l\right)\\
&\quad\quad \leq 2\sum_{k=0}^\infty\sum_{l=0}^\infty
P\left(\left\|\sum_{i=2^k+1}^{2^{k+1}}\sum_{j=2^l+1}^{2^{l+1}}V_{ij}\right\|>
\dfrac{\varepsilon}{4}2^k2^l\right)\\
&\quad\quad\quad\quad\quad + 2\sum_{k=0}^\infty\sum_{l=0}^\infty
P\left(\left\|\sum_{i=2^{k+1}+1}^{2^{k+2}}\sum_{j=2^{l}+1}^{2^{l+1}}V_{ij} \right\|>\dfrac{\varepsilon}{4}2^{k}2^{l}\right)\\
&\quad\quad\quad\quad\quad + 2\sum_{k=0}^\infty\sum_{l=0}^\infty
P\left(\left\|\sum_{i=2^k+1}^{2^{k+1}}\sum_{j=2^{l+1}+1}^{2^{l+2}}V_{ij}\right\|>\dfrac{\varepsilon}{4}2^k2^{l}\right)\\
&\quad\quad\quad\quad\quad + 2\sum_{k=0}^\infty\sum_{l=0}^\infty
P\left(\left\|\sum_{i=2^{k+1}+1}^{2^{k+2}}\sum_{j=2^{l+1}+1}^{2^{l+2}}V_{ij}\right\|>\dfrac{\varepsilon}{4}2^{k}2^{l}\right)\\
&\quad\quad<\infty \mbox{ (by \eqref{3a}-\eqref{3d}).}
\end{align*}
The proof of the implication (\eqref{36}$\Rightarrow$\eqref{37}) is
thus completed.

Now, we assume \eqref{37} holds. Then for arbitrary $\varepsilon>0$,
\begin{align*}
\infty&>\sum_{m=4}^\infty\sum_{n=4}^\infty\dfrac{1}{mn}P\left(\left\|\sum_{i=m+1}^{2m}\sum_{j=n+1}^{2n}V_{ij}\right\|
>\varepsilon mn\right)\\
&=\sum_{k=1}^\infty\sum_{l=1}^\infty
\sum_{m=2^k+2^{k-1}+1}^{2^k+2^{k+1}}\sum_{n=2^l+2^{l-1}+1}^{2^l+2^{l+1}}
\dfrac{1}{mn}P\left(\left\|\sum_{i=m+1}^{2m}\sum_{j=n+1}^{2n}V_{ij}\right\|> \varepsilon mn\right)\\
&
=\sum_{k=1}^\infty\sum_{l=1}^\infty\sum_{m=2^k+2^{k-1}+1}^{2^{k+1}}\sum_{n=2^l+2^{l-1}+1}^{2^{l+1}}
\dfrac{1}{mn}P\left(\left\|\sum_{i=m+1}^{2m}\sum_{j=n+1}^{2n}V_{ij}\right\|>\varepsilon mn\right)\\
&\quad\quad +\sum_{k=1}^\infty\sum_{l=1}^\infty
\sum_{m=2^{k+1}+1}^{2^{k+1}+2^k}\sum_{n=2^l+2^{l-1}+1}^{2^{l+1}}
\dfrac{1}{mn}P\left(\left\|\sum_{i=m+1}^{2m}\sum_{j=n+1}^{2n}V_{ij}\right\|>\varepsilon mn\right)\\
&\quad\quad+\sum_{k=1}^\infty\sum_{l=1}^\infty\sum_{m=2^k+2^{k-1}+1}^{2^{k+1}}\sum_{n=2^{l+1}+1}^{2^{l+1}+2^l}
\dfrac{1}{mn}P\left(\left\|\sum_{i=m+1}^{2m}\sum_{j=n+1}^{2n}V_{ij}\right\|>\varepsilon mn\right)\\
&\quad\quad+\sum_{k=1}^\infty\sum_{l=1}^\infty\sum_{m=2^{k+1}+1}^{2^{k+1}+2^k}\sum_{n=2^{l+1}+1}^{2^{l+1}+2^l}
\dfrac{1}{mn}P\left(\left\|\sum_{i=m+1}^{2m}\sum_{j=n+1}^{2n}V_{ij}\right\|>\varepsilon
mn\right)\\
& \geq \dfrac{1}{32}\sum_{k=1}^\infty\sum_{l=1}^\infty
P\left(\left\|\sum_{i=2^{k+1}+1}^{2^{k+1}+2^k}\sum_{j=2^{l+1}+1}^{2^{l+1}+2^l}V_{ij}\right\|>\varepsilon 2^{k+2}2^{l+2}\right)\\
&\quad\quad+\dfrac{1}{24}\sum_{k=1}^\infty\sum_{l=1}^\infty
P\left(\left\|\sum_{i=2^{k+1}+2^k+1}^{2^{k+2}}\sum_{j=2^{l+1}+1}^{2^{l+1}+2^l}V_{ij}\right\|>\varepsilon 2^{k+2}2^{l+2}\right)\\
&\quad\quad+\dfrac{1}{24}\sum_{k=1}^\infty\sum_{l=1}^\infty
P\left(\left\|\sum_{i=2^{k+1}+1}^{2^{k+1}+2^k}\sum_{j=2^{l+1}+2^l+1}^{2^{l+2}}V_{ij}\right\|>\varepsilon 2^{k+2}2^{l+2}\right)\\
&\quad\quad+\dfrac{1}{18}\sum_{k=1}^\infty\sum_{l=1}^\infty
P\left(\left\|\sum_{i=2^{k+1}+2^k+1}^{2^{k+2}}\sum_{j=2^{l+1}+2^l+1}^{2^{l+2}}V_{ij}\right\|>\varepsilon 2^{k+2}2^{l+2}\right)\\
&\quad\quad\quad \quad\quad\quad \quad\quad\quad \mbox{ (by Lemma \ref{Ete85})}\\
& \geq \dfrac{1}{32}\sum_{k=1}^\infty\sum_{l=1}^\infty
P\left(\left\|\sum_{i=2^{k+1}+1}^{2^{k+2}}\sum_{j=2^{l+1}+1}^{2^{l+2}}V_{ij}\right\|>4\varepsilon
2^{k+2}2^{l+2}\right).
\end{align*}
By the Borel-Cantelli lemma, it follows that
\begin{equation}\label{3e}
\dfrac{\sum_{i=2^{k+1}+1}^{2^{k+2}}\sum_{j=2^{l+1}+1}^{2^{l+2}}V_{ij}}{2^{k+2}2^{l+2}}\to
0 \mbox{ a.s. as }k\vee l\to\infty. \end{equation} Applying Lemma
\ref{RTT34}, \eqref{36} follows from \eqref{3e}.
\end{proof}

The following lemma is similar to Lemma \ref{lem33} but the random
elements $\{V_{mn},m\ge1,n\ge1\}$ are not assumed to be symmetric.
It is a double sum analogue of Theorem 2 of Etemadi
\cite{Etemadi85a}.
\begin{lem}\label{lem34}
Let $\{V_{mn},m\geq 1,n\geq1\}$ be a double array of independent
random elements in a real separable Banach space. Then
\begin{equation}\label{21}\dfrac{S_{mn}}{mn} \to 0 \mbox{ a.s. as } m\vee n\to \infty
\end{equation}
if and only if
\begin{equation}\label{22a}\dfrac{S_{mn}}{mn}\overset{P}{\to} 0 \mbox{ a.s. as } m\vee n\to \infty
\end{equation}
and
\begin{equation}\label{22b}\sum_{m=1}^\infty \sum_{n=1}^\infty
\dfrac{1}{mn}P\left(\left\|\sum_{i=m+1}^{2m}\sum_{j=n+1}^{2n}V_{ij}\right\|>\varepsilon
mn\right)<\infty \mbox{ for all }\varepsilon>0.
\end{equation}
\end{lem}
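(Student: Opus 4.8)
The plan is to prove the equivalence by a symmetrization argument that reduces the claim to the two results already in hand, namely Lemma \ref{RTT32} and the symmetric version Lemma \ref{lem33}. Let $\{V'_{mn}, m\ge1, n\ge1\}$ be an independent copy of $\{V_{mn}, m\ge1, n\ge1\}$, set $S^*_{mn}=\sum_{i=1}^m\sum_{j=1}^n(V_{ij}-V'_{ij})$, and note that $\{V_{mn}-V'_{mn}\}$ is a double array of independent symmetric random elements. By Lemma \ref{RTT32} with $\alpha=\beta=1$, the SLLN \eqref{21} holds if and only if $S^*_{mn}/(mn)\to0$ a.s. and \eqref{22a} holds; and by Lemma \ref{lem33} applied to $\{V_{mn}-V'_{mn}\}$, the a.s. convergence $S^*_{mn}/(mn)\to0$ is equivalent to
$$\sum_{m=1}^\infty\sum_{n=1}^\infty\frac{1}{mn}P\left(\left\|\sum_{i=m+1}^{2m}\sum_{j=n+1}^{2n}(V_{ij}-V'_{ij})\right\|>\varepsilon mn\right)<\infty\text{ for all }\varepsilon>0.$$
Thus it suffices to show that, under the hypothesis \eqref{22a} (which is common to both sides of the desired equivalence), this symmetrized summability condition is equivalent to the original condition \eqref{22b}.

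Writing $T_{mn}=\sum_{i=m+1}^{2m}\sum_{j=n+1}^{2n}V_{ij}$ and $T'_{mn}$ for the corresponding sum of the $V'_{ij}$, one implication is immediate and needs no extra hypothesis: by subadditivity of the norm, $\{\|T_{mn}-T'_{mn}\|>\varepsilon mn\}\subseteq\{\|T_{mn}\|>\tfrac{\varepsilon}{2}mn\}\cup\{\|T'_{mn}\|>\tfrac{\varepsilon}{2}mn\}$, so that $P(\|T_{mn}-T'_{mn}\|>\varepsilon mn)\le 2P(\|T_{mn}\|>\tfrac{\varepsilon}{2}mn)$; hence \eqref{22b} yields the symmetrized condition. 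This settles the implication running from the pair [\eqref{22a} and \eqref{22b}] back to the SLLN \eqref{21}.

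For the remaining (forward) implication I first transfer the convergence in probability from the partial sums to the blocks. Using the inclusion--exclusion identity $T_{mn}=S_{2m,2n}-S_{m,2n}-S_{2m,n}+S_{m,n}$ and rescaling each term (for instance $S_{2m,2n}/(mn)=4\,S_{2m,2n}/((2m)(2n))$), the hypothesis \eqref{22a} forces $T_{mn}/(mn)\overset{P}{\to}0$ as $m\vee n\to\infty$, since each substituted index still tends to infinity in the $\vee$ sense. Consequently, given $\varepsilon>0$, there is $N$ with $P(\|T_{mn}\|\le\tfrac{\varepsilon}{2}mn)\ge\tfrac{1}{2}$ for all $m\vee n>N$, and the weak symmetrization estimate
$$P\left(\|T_{mn}-T'_{mn}\|>\tfrac{\varepsilon}{2}mn\right)\ge P\left(\|T_{mn}\|>\varepsilon mn\right)P\left(\|T'_{mn}\|\le\tfrac{\varepsilon}{2}mn\right)\ge\tfrac{1}{2}\,P\left(\|T_{mn}\|>\varepsilon mn\right),$$
valid by independence of $T_{mn}$ and $T'_{mn}$ together with the triangle inequality, gives $P(\|T_{mn}\|>\varepsilon mn)\le2P(\|T_{mn}-T'_{mn}\|>\tfrac{\varepsilon}{2}mn)$ for all $m\vee n>N$. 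Summing the weight $1/(mn)$ over $m\vee n>N$ and absorbing the finitely many terms with $m,n\le N$ then deduces \eqref{22b} from the symmetrized condition, completing the forward implication.

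I expect the main obstacle to be precisely this last step: ensuring that convergence in probability of the doubly indexed normed partial sums transfers cleanly to the block sums $T_{mn}$ (so that, informally, the median of $T_{mn}$ is $o(mn)$) and then running the weak symmetrization inequality in the Banach-space, double-index setting. The bookkeeping of index ranges---verifying that $m\vee n>N$ excludes only a finite square and that each rescaled partial sum above still has its governing index tending to infinity---is where care is needed, whereas the two invocations of Lemma \ref{RTT32} and Lemma \ref{lem33} are purely mechanical once the equivalence of the two summability conditions is established.
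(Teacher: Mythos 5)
Your proof is correct and follows essentially the same route as the paper's: symmetrize with an independent copy, reduce the statement to Lemma \ref{RTT32} and Lemma \ref{lem33}, and pass between the tails of the block sums and of their symmetrizations via a weak symmetrization bound in one direction and the triangle inequality in the other. The only cosmetic difference is that you obtain the needed stochastic smallness of the block sums from \eqref{22a} together with the inclusion--exclusion identity, whereas the paper derives $\mu_{mn}/(mn)\to 0$ for the medians $\mu_{mn}$ of the block-sum norms directly from \eqref{21}.
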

\begin{proof}
Let $V'=\{V_{mn}^{'},m\geq 1,n\geq 1\}$ and $ S^*_{mn}$ be as in
Lemma \ref{RTT32} and set
 $$Y_{mn}=\sum_{i=m+1}^{2m}\sum_{j=n+1}^{2n}V_{ij},\ m\ge 1, n\ge 1,$$
  and
$$Y'_{mn}=\sum_{i=m+1}^{2m}\sum_{j=n+1}^{2n}V'_{ij},\ m\ge 1, n\ge
1.$$

{\it Proof of the implication} (\eqref{21}$\Rightarrow$ \eqref{22a}
{\it and } \eqref{22b}): Assume \eqref{21} holds and let
$\varepsilon>0$ be arbitrary. By using Lemma \ref{RTT32}, we get
 \begin{equation*}
 \dfrac{\sum_{i=1}^m\sum_{j=1}^n(V_{ij}-V'_{ij})}{mn}\to 0 \mbox{ a.s. as } m\vee n\to
 \infty.
 \end{equation*}
It follows from Lemma \ref{lem33} that
 \begin{equation}\label{382}\sum_{m=1}^\infty\sum_{n=1}^\infty\dfrac{1}{mn}P\left(\left\|\sum_{i=m+1}^{2m}\sum_{j=n+1}^{2n}(V_{ij}-V'_{ij})\right\|
>\dfrac{\varepsilon}{2} mn\right)<\infty.
 \end{equation}
Let $\mu_{mn}=\mbox{ median of } \|Y_{mn}\|$. Then it is clear that
\eqref{21} implies
$$\dfrac{\mu_{mn}}{mn}\to 0 \mbox{
 as } m\vee n\to \infty.$$
 Thus, for $k\vee l$ large enough,
\begin{align*}\sum_{m=k}^\infty\sum_{n=l}^\infty\dfrac{1}{mn}P\left(\left\|Y_{mn}\right\|>
 \varepsilon mn\right)
 & \leq
 \sum_{m=k}^\infty\sum_{n=l}^\infty\dfrac{1}{mn}P\left(\left|\|Y_{mn}\|-\mu_{mn}\right|>
 \dfrac{\varepsilon }{2}mn\right)\\
 & \leq 2\sum_{m=k}^\infty\sum_{n=l}^\infty\dfrac{1}{mn}P\left(\left|\|Y_{mn}\|-\|Y'_{mn}\|\right|>
 \dfrac{\varepsilon}{2}mn\right)\\
 &\mbox{ (by the weak symmetrization inequality \cite[p.134]{Gut13})}\\
  &\leq 2\sum_{m=k}^\infty\sum_{n=l}^\infty\dfrac{1}{mn}P\left(\left\|Y_{mn}-Y'_{mn}\right\|>
 \dfrac{\varepsilon}{2}mn\right)\\
 &<\infty \mbox{ (by \eqref{382})}
 \end{align*}
 thereby proving \eqref{22b}. Of course \eqref{21} immediately
 implies \eqref{22a}.

{\it Proof of the implication} (\eqref{22a} {\it and } \eqref{22b}
$\Rightarrow$ \eqref{21}): Assume that \eqref{22a} and \eqref{22b}
 hold.
Again, let $\varepsilon>0$ be arbitrary, then
\begin{align*}\
\sum_{m=1}^\infty\sum_{n=1}^\infty
\dfrac{1}{mn}P(\|Y_{mn}-Y'_{mn}\|>\varepsilon mn)&\leq 2
\sum_{m=1}^\infty\sum_{n=1}^\infty
\dfrac{1}{mn}P\left(\left\|Y_{mn}\right\|>\dfrac{\varepsilon}{2}mn\right)\\
&<\infty \mbox{ (by \eqref{22b}).}
\end{align*}
It thus follows from Lemma \ref{lem33} that
\begin{equation}\label{22c}
\dfrac{S_{mn}^{*}}{mn}\to 0 \mbox{ a.s. as } m\vee n\to\infty.
\end{equation}
By applying Lemma \ref{RTT32}, \eqref{21} follows from \eqref{22a}
and \eqref{22c}.
\end{proof}

\begin{proof}[Proof of Theorem \ref{thm32}] From \eqref{38}, we have
\begin{equation}\label{22d}\sum_{m=1}^\infty\sum_{n=1}^\infty\dfrac{E\left\|S_{mn}\right\|^p}{(mn)^{p+1}}<\infty
\end{equation}
Using Remark \ref{rm1}, we get from \eqref{22d} that
\begin{equation}\label{22f}\dfrac{S_{mn}}{mn}{\to}\ 0 \mbox{ in $L_p$ as }
m\vee n\to\infty.
\end{equation}
It thus follows from \eqref{22f} and Markov's inequality that
\begin{equation}\label{22e}\dfrac{S_{mn}}{mn}\overset{P}{\to}0 \mbox{ as }
m\vee n\to\infty.
\end{equation}
On the other hand,
\begin{align*}& \sum_{m=1}^\infty \sum_{n=1}^\infty \dfrac{1}{mn}P\left(\left\|\sum_{i=m+1}^{2m}\sum_{j=n+1}^{2n}V_{ij}\right\|>
\varepsilon mn\right)\\
&\quad\leq \sum_{m=1}^\infty\sum_{n=1}^\infty
\dfrac{1}{\varepsilon^p(mn)^{p+1}}
E\left\|\sum_{i=m+1}^{2m}\sum_{j=n+1}^{2n}V_{ij}\right\|^p \mbox{ (by Markov's inequality)}\\
&\quad=\sum_{m=1}^\infty\sum_{n=1}^\infty \dfrac{1}{\varepsilon^p(mn)^{p+1}}E\left\|S_{(2m,2n)}-S_{(2m,n)}-S_{(m,2n)}+S_{(m,n)}\right\|^p\\
& \quad\leq \sum_{m=1}^\infty\sum_{n=1}^\infty \dfrac{C}{(mn)^{p+1}}
E\left (\left\|S_{(2m,2n)}\right\|^p+\left\|S_{(2m,n)}\right\|^p+\left\|S_{(m,2n)}\right\|^p+\left\|S_{(m,n)}\right\|^p\right)\\
&\quad <\infty \mbox{ (by \eqref{22d}).}
\end{align*}
Combining this and \eqref{22e}, we see that the conclusion
\eqref{39} follows from Lemma \ref{lem34}.
\end{proof}

\section{Illustrating Examples}
By Theorem \ref{thm31}, if a real separable Banach space is not of
Rademacher type $p$ where $1<p\le 2$, then there exists a double
array of independent mean $0$ random elements for which \eqref{31}
holds but \eqref{32} fails. The following example, which was
inspired by an example of Rosalsky and Thanh \cite{RosalskyThanh06},
exhibits such a double array of random elements in the Banach space
$\ell_1$. This example will also demonstrate that, there exists a
double array of random elements $\{T_{mn},m\ge 1,n\ge 1\}$
satisfying $ T_{mn}\overset{c}{\to}0$ and $T_{mn}\overset{L_p}{\to}
0$ as $m\vee n\to\infty$, but $ T_{mn}\overset{c,L_p}{\nrightarrow}
0$.

\begin{exm}\label{exam41}{\rm
 Let $1<p\leq 2$ and consider the Banach space
$\ell_1$ (which is not of Rademacher type $p$). Let $v^{(k)}$ denote
 the element of $\ell_1$ having $1$ in its $k^{\text{th}}$ position and $0$
 elsewhere, $k\geq 1$. Let $\varphi : \mathbb{N} \times \mathbb{N} \to \mathbb{N}$ be a one-to-one and
 onto mapping. Let $\{V_{mn}, m\geq 1, n\geq 1\}$ be a double array of independent
 random elements in $\ell_1$ by requiring the $\{V_{mn}, m\geq 1, n\geq 1\}$
 to be independent with
 $$ P\left(V_{mn}=v^{(\varphi(m,n))}\right)=
 P\left(V_{mn}=-v^{(\varphi(m,n))}\right)=\dfrac{1}{2},\ m\geq 1,n\geq 1.$$
 We have
 $$ \sum_{m=1}^{\infty}\sum_{n=1}^{\infty}\dfrac{E\left\|V_{mn}\right\|^p}{(mn)^p}=
 \sum_{m=1}^{\infty}\sum_{n=1}^{\infty}\dfrac{1}{(mn)^p}<\infty.$$
 Hence \eqref{31} holds but
 \begin{equation}\label{41}
 \sum_{m=1}^{\infty}\sum_{n=1}^{\infty}E\left\|\dfrac{S_{mn}}{(mn)^{(p+1)/p}}\right\|^p=
 \sum_{m=1}^{\infty}\sum_{n=1}^{\infty}\dfrac{1}{mn}=\infty
 \end{equation}
 and so \eqref{32} fails. Moreover, since for all $\varepsilon>0$
 and all large $ m\vee n$
 $$P\left(\dfrac{\left\|S_{mn}\right\|}{(mn)^{(p+1)/p}}>\varepsilon\right)=P\left(\dfrac{1}{(mn)^{1/p}}>\varepsilon\right)=0,$$
 it follows that
  $$\dfrac{S_{mn}}{(mn)^{(p+1)/p}}\overset{c}{\to}
  0.$$  Now by the computation in \eqref{41}, we have
  $$ E\left\|\dfrac{S_{mn}}{(mn)^{(p+1)/p}}\right\|^p=
  \dfrac{1}{mn}\to 0\mbox{ as } m\vee n\to\infty$$
  and so $$ \dfrac{S_{mn}}{(mn)^{(p+1)/p}}
  \overset{L_p} {\to} 0 \mbox{ as } m\vee n\to\infty.$$
  Consequently
  $$\dfrac{S_{mn}}{(mn)^{(p+1)/p}}\overset{c}{\to}0 \mbox{ and }
  \dfrac{S_{mn}}{(mn)^{(p+1)/p}}\overset{L_p}{\to}0 \mbox{ as } m\vee n\to\infty$$
  but  $$\dfrac{S_{mn}}{(mn)^{(p+1)/p}}\overset{c,L_p}{\nrightarrow}0.$$}
 \end{exm}

The following example shows that in general, the independence
assumption in Theorem \ref{thm32} cannot be weakened to the
assumption that the summands are pairwise independent. The example is based
on Theorem 3 in Cs\"{o}rgo, Tandori and Totik \cite{CTT}.

\begin{exm}\label{exm42}{\rm
Cs\"{o}rgo, Tandori and Totik \cite[Theorem 3]{CTT} constructed a
sequence of pairwise independent real-valued random variables
$\{X_m,m\ge1\}$ satisfying $EX_m=0,EX_{m}^2<\infty$, and
\begin{equation}\label{CTT1} \sum_{m=2}^\infty \dfrac{EX_{m}^2
(\log(\log m))^{1-\varepsilon}}{m^2}<\infty,\ \varepsilon>0,
\end{equation}
\begin{equation}\label{CTT2} P\left(\limsup_{m\to\infty}\dfrac{\left|\sum_{i=1}^m
X_i\right|}{m}=\infty\right)>0.
\end{equation}
For $m\ge 1$ we set $V_{mn}=X_{m}$ if $n=1$ and $V_{mn}=0$ if
$n\ge2$. In Theorem \ref{thm32}, let $p=2$, then
\begin{align*}\sum_{m=1}^\infty\sum_{n=1}^\infty E\Big\| \dfrac{S_{mn}}{(mn)^{(p+1)/p}}\Big\|^p
&= \sum_{m=1}^\infty\sum_{n=1}^\infty \dfrac{\sum_{i=1}^m\sum_{j=1}^n EV_{ij}^2}{(mn)^{3}}\\
&= \sum_{m=1}^\infty\dfrac{\sum_{i=1}^m EX_{i}^2}{m^{3}}\\
&= \sum_{i=1}^\infty\sum_{m=i}^\infty\dfrac{EX_{i}^2}{m^{3}}\\
&\le C\sum_{i=1}^\infty \dfrac{EX_{i}^2}{i^2}<\infty \mbox{ (by
\eqref{CTT1})}.
\end{align*}
 So \eqref{38} holds. However, it follows from \eqref{CTT2} that
 \eqref{39} fails.

 }
\end{exm}

\begin{center}
{\bf Acknowledgements}
\end{center}
The research was supported by the Vietnam Institute for Advanced
Study in Mathematics (VIASM) and Ministry of Education and Training,
grant no. B2016-TDV-06.

\end{document}